\def\Het{H_{\text{\'et}}}
\def\Ql{\Q_\ell}
\def\res#1{{\F}_{#1}}
\def\resbar#1{\bar{\F}_{#1}}
\begin{document}

\title{Character formula for Weil representations in terms of Frobenius traces}

\author{Tim Dokchitser}
\address{Department of Mathematics, University Walk, Bristol BS8 1TW, UK}
\email{tim.dokchitser@bristol.ac.uk}

\author{Vladimir Dokchitser}
\address{University College London, London WC1H 0AY, UK}
\email{v.dokchitser@ucl.ac.uk}


\begin{abstract}
It is known that the \'etale cohomology of a potentially good abelian variety over a local field $K$ is 
determined by its Euler factors over the extensions of $K$. We extend this to all abelian varieties,
show that it is enough to take extensions where $A$ is semistable, and give a uniform version 
over $p$-adic fields where 
the extensions are the same for all abelian varieties of a given dimension.
The results are explicit, and apply to a wide class of Weil-Deligne representations.
\end{abstract}

\maketitle




\marginpar{Rewrite to stress bad reduction as easy as good reduction for curves and abelian varieties}

Ramified $l$-adic local Galois representations tend to be unwelcome. 
We give an explicit character formula which reconstructs such a representation from its Frobenius traces over extensions 
where it is unramified. 
We also record some consequences for \'etale cohomology of abelian varieties 
and other $l$-adic representations. 
The paper is a number-theoretic counterpart of \cite{clifford}, which concerns representations of finite groups.

Our setting is as follows. Let $K$ be a non-archimedean local field, with residue field $\res K$,
separable closure $K^s$ and algebraic closure~$\bar K$.
We write $K^{nr}$ for the maximal unramified extension of $K$ in $K^s$,
$I_K=I_{K^s/K}$ for the absolute inertia group and 
$\Frob_K\in G_K=\Gal(K^s/K)$ for an arithmetic Frobenius element.
Recall that the Weil group $W_K$ is generated by $I_K$ and $\Frob_K$; its topology is
the usual profinite one on $I_K$ and discrete on $W_K/I_K\iso\Z$. A Weil representation over $K$ is a continuous
finite-dimensional complex representation of $W_K$. 
We refer the reader to \cite{TatN} for the background on Weil
and Weil-Deligne representations. 


We begin with an analogue of the character formula \cite[Cor. 9]{clifford} for Weil representations:

\begin{theorem}
\label{X16}
Let $F/K$ be a finite Galois extension of local fields and $\rho$ 
a semisimple Weil representation over $K$ that factors through $\Gal(F^{nr}/K)$.
Let $\{\rho_i\}_{i\in\Lambda}$ be a set of irreducible 
representations of $\Gal(F/K)$, with exactly one in each set of unramified twists%
\footnote{i.e. $\rho_i\ne \rho_j\tensor$(1-dim) unramified, and each irreducible of 
$\Gal(F/K)$ is $\rho_i\tensor$(1-dim unramified) for some~$i$}. 
Write $I<\Gal(F/K)$ for the inertia group and $m_i = \langle \Res_I\rho_i, \Res_I\rho_i\rangle$.
Then
\begin{itemize}
\item[(i)]
There are unramified representations $\Psi_i$ such that
$
  \rho\iso\bigoplus_{i\in\Lambda} \rho_i\tensor\Psi_i.
$
\item[(ii)]
Fix $i\in\Lambda$. 
For every $d>0$,
$
 \tr \Psi_i(\Frob_K^{dm_i})=\frac{1}{|I|\,m_i} \sum_{L} \overline{\tr \rho_i(\Frob_L)}\,\tr \rho(\Frob_L),
$
where $L$ ranges over extensions of $K$ in $\smash{F^{nr}}$ of ramification degree $|I|$ 
and residue degree $dm_i$.
Note that $F^{nr}/L$ is unramified, so $\rho(\Frob_L), \rho_i(\Frob_L)$ are \hbox{well-defined.}

%
\item[(iii)] 
The twist $\rho_i\tensor\Psi_i$ is uniquely determined by {\rm (ii)}.

\item[(iv)] 
Concretely, 
suppose
\hbox{$\dim\Psi_i\!\le\! N$}. There is a unique $0\!\le\!n\!\le\!N$ and $\lambda_1,...,\lambda_n\in\C^\times$ 
such that 
$\sum_k\lambda_k^d=\tr \Psi_i(\Frob_K^{dm_i})$ for $d=1,...,N$.
Then 
$\Psi_i(\Frob_K)$
has eigenvalues $\sqrt[m_i]{\lambda_1},...,\sqrt[m_i]{\lambda_n}$
for some choice of the roots, and
%
$\rho_i\tensor\Psi_i$ is independent of this choice.
\end{itemize}
\end{theorem}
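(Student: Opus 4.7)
Set $G := \Gal(F^{nr}/K)$, an extension of $\Gal(F/K)$ by $\Gal(F^{nr}/F) \cong \hat{\mathbb{Z}}$ whose inertia subgroup is canonically identified with $I \subset \Gal(F/K)$. For (i) I would invoke Clifford theory: every irreducible summand of $\rho$, viewed as a $G$-representation, restricts to $\Gal(F/K)$ as an unramified twist of exactly one $\rho_i$. Grouping the summands by this index produces $\rho \cong \bigoplus_{i \in \Lambda} \rho_i \otimes \Psi_i$, with $\Psi_i$ the multiplicity space; since it is acted on by $G/I \cong \hat{\mathbb{Z}}$, it is unramified.

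For (ii), substituting the above yields $\tr\rho(\Frob_L) = \sum_j \tr\rho_j(\Frob_L)\,\tr\Psi_j(\Frob_L)$, and as $\Psi_j$ is unramified $\tr\Psi_j(\Frob_L) = \tr\Psi_j(\Frob_K^{dm_i})$ on the $L$'s of residue degree $dm_i$. The combinatorial input is that, once a Frobenius lift $\phi \in G$ is chosen, the map $\alpha \mapsto L_\alpha := (F^{nr})^{\overline{\langle \alpha\phi^{dm_i}\rangle}}$ is a bijection between $I$ and the set of $L \subset F^{nr}$ with $e(L/K) = |I|$ and $f(L/K) = dm_i$, and under it $\Frob_{L_\alpha} = \alpha\phi^{dm_i}$ (any closed subgroup $H \leq G$ with $H \cap I = 1$ and $[G:HI] = dm_i$ is topologically cyclic with unique generator of the form $\alpha\phi^{dm_i}$). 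Setting $g := \bar\phi^{dm_i} \in \Gal(F/K)$, the identity in (ii) reduces to the coset-orthogonality identity
\[
\sum_{\alpha \in I} \overline{\tr\rho_i(\alpha g)}\,\tr\rho_j(\alpha g) \;=\; |I|\,m_i\,\delta_{ij},
\]
which I would prove by expanding both traces in matrix coefficients of $I$-constituents of $\rho_i|_I$ and $\rho_j|_I$, applying Schur orthogonality on $I$, and using that for $i \ne j$ the $I$-constituents of $\rho_i|_I$ and $\rho_j|_I$ are disjoint (this is precisely what the "one per unramified twist class" convention in the footnote guarantees, since unramified twists preserve $I$-restrictions).

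For (iii) and (iv), the data in (ii) determines $\tr\Psi_i(\Frob_K^{dm_i})$ for every $d > 0$, hence via Newton's identities the multiset of eigenvalues $(\lambda_1,\ldots,\lambda_n)$ of the semisimple matrix $\Psi_i(\Frob_K^{m_i})$; the existence and uniqueness of the $\lambda_k$'s in (iv) is the standard fact that the first $N$ power sums of a multiset of size $\leq N$ determine it. Any choice of $m_i$-th roots $\mu_k$ of $\lambda_k$ then produces an unramified $\Psi_i$ with $\Psi_i(\Frob_K)$ having eigenvalues $\mu_k$; two such choices differ summand-by-summand by unramified characters of order dividing $m_i$. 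The independence of $\rho_i \otimes \Psi_i$ therefore reduces to the Clifford-theoretic fact that $\rho_i \otimes \eta \cong \rho_i$ for every unramified character $\eta$ with $\eta^{m_i} = 1$, i.e., that the stabiliser of $\rho_i$ in the unramified character group equals $\mu_{m_i} \subset \mathbb{C}^\times$. This has order $m_i$ by the Clifford/Mackey count of irreducibles of $\Gal(F/K)$ restricting to a given $I$-representation.

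The main obstacle I foresee is assembling (ii) rigorously: the bijection $\alpha \leftrightarrow L_\alpha$ is a clean piece of profinite group theory, but the coset Schur-orthogonality identity — holding with the same constant $|I|m_i$ uniformly in $g$, and collapsing across the Clifford decompositions of both $\rho_i|_I$ and $\rho_j|_I$ — is the technical heart of the proof and requires careful bookkeeping of matrix coefficients.
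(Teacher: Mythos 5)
Your proposal follows the same structure as the paper's proof: (i) via Clifford theory, (ii) via the bijection between inertia elements and the fields $L$ plus a coset orthogonality relation, and (iii)--(iv) via Newton's identities together with the fact that the unramified stabiliser of $\rho_i$ has order $m_i$. The paper simply outsources the coset orthogonality and the stabiliser lemma to \cite[Cor.\ 9, Lemma 5]{clifford}, whereas you sketch inline proofs of them; otherwise the route is identical.

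One caution on your ``technical heart.'' The coset identity
$\sum_{\alpha\in I}\overline{\tr\rho_i(\alpha g)}\,\tr\rho_j(\alpha g)=|I|m_i\delta_{ij}$
is \emph{not} deducible from Schur orthogonality on $I$ together with disjointness of $I$-constituents alone: that only handles $i\ne j$. For $i=j$ the sum genuinely depends on $g$. Writing $\mathbb{1}_{Ig}$ as a Fourier sum over $\widehat{G/I}$ and using column orthogonality on $G=\Gal(F/K)$ gives
$\sum_{\alpha\in I}|\tr\rho_i(\alpha g)|^2=|I|\sum_{\chi\in S_i}\overline{\chi(g)}$,
where $S_i=\{\chi\text{ unramified}:\rho_i\tensor\chi\iso\rho_i\}$. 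This equals $|I|m_i$ precisely when $g$ lies in the common kernel, i.e.\ in the image of $\bar\phi^{m_i}$ --- which is exactly why the theorem specifies residue degree $dm_i$ and not $d$. (Concretely, with $G=S_3$, $I=A_3$, $\rho_i$ the $2$-dimensional irreducible and $g$ a transposition, the sum is $0$, not $|I|m_i=6$.) So the $i=j$ case requires the stabiliser fact $|S_i|=m_i$ (which you already invoke for (iv), and which follows from Frobenius reciprocity applied to $\Ind_I^G\Res_I\rho_i\iso\bigoplus_\chi\rho_i\tensor\chi$) together with the specific form $g=\bar\phi^{dm_i}$; ``careful bookkeeping of matrix coefficients'' of $I$ alone won't produce the constant. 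With that repair the plan is sound.
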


Let $\ell$ be a prime different from the residue characteristic of $K$.
As an application, we deduce that the $\ell$-adic representation $\rho_\ell=\Het^1(A_{\bar K},\Ql)$ 
of an abelian variety $A/K$ is determined by the traces $\Tr\rho^{I_L}_\ell(\Frob_L)$ over the fields $L$ where $A$ 
is semistable. 
(Here, as usual, $\rho_\ell^{I_L}$ is the maximal subrepresentation unramified over $L$.)
In fact, when $K$ has characteristic 0, we can choose a universal finite list 
of such fields to control all abelian varieties of a fixed dimension.


\begin{theorem}
\label{abmain}
Let $g\ge 0$, and suppose $\vchar K=0$. There is a finite Galois extension $K_g/K$ such that
\begin{enumerate}
\item[(i)]
Every $g$-dimensional abelian variety $A/K$ has semistable reduction over $K_g$.
\item[(ii)]
The $G_K$-representation
$\rho_\ell=\Het^1(A_{\bar K},\Ql)$ is uniquely determined by the traces\\ $\Tr\rho^{I_L}_\ell(\Frob_L)$ over subfields 
$K\subset L\subset K_g$ for which $K_g/L$ is unramified.
\item[(iii)]
When $g\ge 2$, for every smooth projective curve $X/K$ of genus $g$ the $G_K$-representation
$\Het^1(X_{\bar K},\Ql)$ is uniquely determined by point counts $\#\bar \cX_L(\res L)$
with $L$ as in (ii).
Here $\cX_L$ is the minimal regular model of $X/L$ with special fibre $\bar \cX_L/\res L$.
\end{enumerate}
\end{theorem}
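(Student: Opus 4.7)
For (i), the plan is to exploit that by N\'eron--Ogg--Shafarevich and Raynaud a $g$-dimensional abelian variety $A/K$ becomes semistable once inertia acts trivially on $A[3]$, and that $\Gal(K(A[3])/K) \hookrightarrow GL_{2g}(\res 3)$ has order bounded in terms of $g$. I would take $K_g^{\mathrm{ss}}$ to be the compositum of the (finitely many, by Krasner) Galois extensions of $K$ whose Galois group embeds in $GL_{2g}(\res 3)$, and then enlarge by the unramified extension of $K$ of sufficiently large degree $M(g)$ to form $K_g$, so that every residue degree $dm$ with $d \le 2g$ and $m \le |\Gal(K_g/K)|$ is realised as $f(L/K)$ for some $L$ with $K \subset L \subset K_g$ and $K_g/L$ unramified. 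This yields~(i) and supplies the $L$'s needed in the rest.

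For (ii), let $(\rho, N)$ be the Frobenius-semisimple Weil--Deligne representation attached to $\rho_\ell$. Semistability of $A$ over $K_g$ forces $\rho$ to be trivial on $I_{K_g}$, so $\rho$ factors through $\Gal(K_g^{nr}/K)$. For $L$ as in the statement, $I_L = I_{K_g}$, hence $\rho_\ell^{I_L} = \ker N$, and $\Frob_L$ acts on this subspace as $\rho(\Frob_L)$ (the unipotent correction vanishing on $\ker N$). Since $\rho(g) N \rho(g)^{-1}$ is a scalar multiple of $N$, the subspace $\ker N \subset V$ is a Weil subrepresentation of $\rho$ that also factors through $\Gal(K_g^{nr}/K)$; with the residue-degree supply from~(i), Theorem~\ref{X16}(ii)--(iv) applies and reconstructs $\ker N$ from the given traces. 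The weight--monodromy structure of a semistable abelian variety then yields $V \iso \ker N \oplus N(V)(-1)$ as Weil representations, with $N(V) \subset \ker N$ identified as the weight-$0$ summand (Frobenius eigenvalues of absolute value~$1$) and $N\colon N(V)(-1) \xrightarrow{\sim} N(V)$ canonical. This recovers $(\rho, N)$, hence~$\rho_\ell$.

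For (iii), I would set $A = \Jac(X)$, so that $\rho_\ell \iso \Het^1(X_{\bar K}, \Ql)$, and then extract each $\Tr(\Frob_L | \rho_\ell^{I_L})$ from $\#\bar{\cX}_L(\res L)$. Since $X$ has semistable reduction over $L$, $\bar{\cX}_L$ is a geometrically connected nodal curve and nearby cycles give $H^1(\bar{\cX}_L, \Ql) \iso \rho_\ell^{I_L}$. Grothendieck--Lefschetz then reads
\[
  \#\bar{\cX}_L(\res L) \;=\; 1 - \Tr(\Frob_L | \rho_\ell^{I_L}) + q_L \cdot b_L,
\]
where $b_L \in \Z_{\ge 0}$ is the number of Frobenius-fixed geometric irreducible components of $\bar{\cX}_L$, bounded in terms of $g$ and $|\Gal(K_g/K)|$. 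For $g \ge 2$, the Weil bound $|\Tr(\Frob_L | \rho_\ell^{I_L})| \le 2g\sqrt{q_L}$ separates the three contributions once $q_L > 16g^2$, so the middle trace is uniquely recovered from the point count; restricting $L$ to the large-$q_L$ members of the family supplies enough traces to invoke~(ii). The main obstacle is the bookkeeping in~(i): $K_g$ has to be enlarged so that every residue degree $dm_i$ demanded by Theorem~\ref{X16}(iv) for every Weil representation of $\Gal(K_g/K)$ arising from a $g$-dimensional abelian variety is realised, and one must then verify that the subfamily with $q_L > 16g^2$ used in~(iii) is still rich enough for~(ii).
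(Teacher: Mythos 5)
Your outline runs parallel to the paper's proof in most respects: (i)~by bounding the inertia image (your route via $A[m]$ for $m\ge3$ coprime to the residue characteristic and Raynaud's criterion is a legitimate alternative to the paper's argument, which bounds the image of \emph{any} continuous $\rho_\ell\colon G_K\to\GL_{2g}(\Ql)$ via a torsion-free open subgroup of $\GL_{2g}(\Z_\ell)$ and then invokes Theorem~\ref{genunifield}); (ii)~by recovering $\ker N$ as a Weil representation from Theorem~\ref{X16} and then reconstructing $(\rho,N)$ via weight--monodromy, which is exactly the content the paper packages as Corollary~\ref{genunifield2}; and (iii)~by Deligne--Mumford, the comparison $\Het^1(X)\iso\Het^1(\Jac X)$, nearby cycles, and Grothendieck--Lefschetz with $t_0=1$, $t_2\equiv 0\pmod q$, $|t_1|\le 2g\sqrt q<q/2$. (Minor point: you do not actually need to bound the number of Frobenius-fixed components $b_L$; it suffices that $t_2$ is a multiple of $q$, so $t_1\equiv 1-\#\bar\cX_L(\res L)\pmod q$ already pins down $t_1$.)

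The genuine gap is the one you yourself flag at the end and leave open: for (iii) you must run (ii) using \emph{only} those $L$ with $q_L>16g^2$, and it is not automatic that this thinned-out family still determines the representation. Theorem~\ref{X16}(iv) asks for the power sums $\tr\Psi_i(\Frob_K^{dm_i})$ for $d=1,\dots,N$ consecutively, and the fields realising $d=1$ have residue degree only $m_i$, which need not exceed your threshold; enlarging $K_g$ by an unramified extension does not remove these small-residue-degree subfields, since $K\subset L\subset K_g$ with $K_g/L$ unramified still allows $f(L/K)$ as small as $1\cdot m_i$. The paper resolves this by proving a strengthened statement (Theorem~\ref{abmain2}) with an extra parameter $c$ controlling $|\res L|$: one picks a prime $d$ with $[\Ql(\zeta_d):\Ql]>4g^2$ and $|\res K|^d>c$, passes to the unramified extension $K'/K$ of degree $d$, applies the $c=1$ case over $K'$ to recover $\rho_\ell|_{G_{K'}}$ (so every field used satisfies $q_L\ge q_K^d>c$), and then descends back to $K$: for $L\not\supset K'$ one knows the Frobenius eigenvalues over $LK'$, i.e.\ the $d$-th powers $\alpha_j^d$, and each $\alpha_j^d$ has a \emph{unique} $d$-th root generating an extension of $\Ql$ of degree $\le 2g$ (any other root $\alpha_j\zeta_d^i$ would force $[\Ql(\alpha_j,\zeta_d):\Ql]\le 4g^2$, contradicting the choice of $d$). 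Without this descent step, or some substitute for it, your proof of (iii) does not close. I would also note a small circularity in your construction of $K_g$ in (i): $M(g)$ is defined in terms of $|\Gal(K_g/K)|$, which depends on $M(g)$; this is easily repaired (bound $dm_i$ in terms of $g$ and $|\Gal(K_g^{\mathrm{ss}}/K)|$ first, then choose $M(g)$), but as written it is not well-founded.
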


%


\noindent
We caution the reader that the result is false when $\vchar K>0$, see Remark \ref{oops}.

We now return to the case of arbitrary characteristic.
Theorem \ref{X16} allows one to explicitly reconstruct local Galois 
representations from the restrictions to subgroups where they become semistable
or unramified (extending \cite{weil}). For example, suppose $X/K$ is a curve with `very bad' reduction.
Then we can identify the Galois representation 
$\rho_l=\Het^1(X_{\bar K},\Ql)$, provided we know a Galois extension $F/K$ where $X$ becomes semistable,
and we can count points on the reduction of $X$ in extensions of $K$ where $X$ is semistable.
We restore the decomposition of $\rho_l$ into irreducibles via a weighted average of Frobenius traces 
as in Theorem \ref{X16} (ii), which can be computed as point counts on the reduced curves. 
We give a numerical example at the end of this paper, which illustrates this in detail.
This style of argument has been used for specific Galois groups (see 
\cite[\S\S 3-4]{weil}, \cite[Thm 7.3]{newton}, \cite{nir3}, \cite[App. A]{KP}), and Theorem \ref{X16}
gives a universal character formula which applies for arbitrary groups.


%
%
%
%

Here are some theoretical consequences of Theorem \ref{X16} for general representations.
The following corollary is essentially due to Saito \cite[Lemma 1(1)]{Sai}; see also Remark \ref{Che}.

\begin{corollary}
\label{repmain}
Every semisimple Weil representation $\rho$ over $K$
is uniquely determined by the 
traces
$
  \tr \rho(\Frob_L),
$
where $L$ varies over finite separable extensions of $K$
over which $\rho$ is unramified.
\end{corollary}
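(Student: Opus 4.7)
The plan is to deduce the corollary directly from Theorem~\ref{X16}. Because $\rho$ is a continuous representation of the Weil group on a finite-dimensional space, its restriction to inertia has finite image, so there is a finite Galois extension $F/K$ through which $\rho$ factors in the sense of Theorem~\ref{X16}, i.e.\ $\rho$ is trivial on $\Gal(\bar K/F^{nr})$. This places us in that theorem's setting.

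Apply Theorem~\ref{X16}(i) to write $\rho \iso \bigoplus_{i\in\Lambda}\rho_i\tensor\Psi_i$. The auxiliary fields $L$ appearing in the character formula of part~(ii) are sub-extensions of $F^{nr}/K$ of ramification degree $|I|$; for each such $L$, the extension $F^{nr}/L$ is totally unramified, and hence $\rho|_{G_L}$ is unramified, so $\tr\rho(\Frob_L)$ is included in the trace data granted by the corollary. By (ii) these traces determine $\tr\Psi_i(\Frob_K^{dm_i})$ for every $d\ge 1$; by (iv) they pin down the eigenvalues of $\Psi_i(\Frob_K)$ up to $m_i$-th roots; and by (iii) the resulting twist $\rho_i\tensor\Psi_i$ is independent of that ambiguity. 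Each summand is thereby determined, and so is $\rho$.

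For two representations $\rho,\rho'$ with identical trace data we run the above argument after choosing a common $F$ through which both factor (e.g.\ the compositum of individual choices); the reconstructed decompositions then coincide, giving $\rho\iso\rho'$. The only potential obstacle is the verification that Theorem~\ref{X16}(ii) uses only trace data made available by the corollary, which is precisely the point addressed in the previous paragraph; all the genuine content is already packaged in Theorem~\ref{X16} itself.
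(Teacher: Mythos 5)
Your proposal is correct and follows essentially the same route as the paper: pick a finite Galois $F/K$ through which the representation(s) factor (possible since inertia images are finite), and then invoke Theorem~\ref{X16} to recover $\rho$ from Frobenius traces over subextensions $L$ of $F^{nr}/K$ of maximal ramification degree, over which $\rho$ is unramified. The paper states this more tersely by directly comparing two non-isomorphic representations through a common $F$, which is exactly the step you add in your final paragraph, so there is no substantive difference.
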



\begin{notation*}
Recall that a Frobenius semisimple Weil-Deligne representation 
$\rho=(\rho_{\rm Weil},N)$ over $K$ can be decomposed as
$
  \rho = \bigoplus_{n\ge 1} \rho_n \tensor \Sp_n, 
$
where $\rho_n$ are semisimple Weil representations (i.e. have $N=0$), 
and $\Sp_n$ is the $n$-dimensional special representation \cite[4.1.4--4.1.5]{TatN}.
In particular,  $\rho^{N=0}:=\ker N= \bigoplus \rho_n$. 

We say that $\rho$ 
is \emph{weight-monodromy compatible} if 
the eigenvalues of $\Frob_K$ on $\rho_i$ are Weil numbers of absolute value 
\marginpar{Explain trace of Frobenius on inertia invariants
  as a term of the characteristic polynomial}
$q^{\frac{n-1}2}$, where $q=|\res K|$. 
We use the same term for $\ell$-adic representations when their associated Weil-Deligne representation
is weight-monodromy compatible.
\end{notation*}

For every proper smooth variety $X/K$ and $0\le i\le2\dim X$,
$\Het^i(X_{\bar K},\Ql)(\frac i2)$ is conjectured to be weight-monodromy 
compatible \cite{DelH1}, and this is often known (see \cite[\S1]{Sch} 
for a summary).
Because a weight-monodromy compatible representation is determined by 
$\rho^{N=0}$, from Corollary \ref{repmain} we get


\begin{corollary}
\label{wdmain}
Frobenius-semisimple Weil-Deligne representations $\rho=(\rho_{\rm Weil},N)$ over $K$ 
satisfying the weight-monodromy compatibility are
uniquely determined by the 
traces $\tr\rho^{N=0}(\Frob_L)$,
where $L$ varies over finite separable extensions of $K$ over which $\rho_{\rm Weil}$ is unramified.
\end{corollary}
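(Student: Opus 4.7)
The plan is to combine the weight--monodromy hypothesis with Corollary \ref{repmain} applied to the semisimple Weil representation $\sigma:=\rho^{N=0}$. The decomposition $\rho=\bigoplus_{n\ge1}\rho_n\tensor\Sp_n$ gives $\sigma=\bigoplus_n\rho_n$ as a Weil representation, so the target is to recover the multiset of pairs $(\rho_n,n)$ from the data.

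First I would argue that the given traces determine $\sigma$. Because $\rho$ is Frobenius-semisimple, $\rho_{\rm Weil}$ is a semisimple Weil representation, so its inertia image is finite and there is a finite Galois extension $L_0/K$ over which $\rho_{\rm Weil}$ becomes unramified; any $L\supset L_0$ then also has $\rho_{\rm Weil}$ (and hence its subrepresentation $\sigma$) unramified. Thus the hypothesized set of fields $L$ is cofinal among extensions over which $\sigma$ is unramified, and Corollary \ref{repmain}, applied to the semisimple Weil representation $\sigma$, recovers $\sigma=\bigoplus_n\rho_n$ from the traces $\tr\sigma(\Frob_L)=\tr\rho^{N=0}(\Frob_L)$.

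Next I would use weight--monodromy compatibility to separate the summands $\rho_n$. By hypothesis the eigenvalues of $\Frob_K$ on $\rho_n$ are Weil numbers of absolute value $q^{(n-1)/2}$, and since these absolute values are pairwise distinct for distinct $n$, one can decompose the underlying space of $\sigma$ as the direct sum of the generalized eigenspaces of $\Frob_K$ whose eigenvalues have a prescribed absolute value $q^{(n-1)/2}$. This decomposition is Weil-stable (Frobenius commutes with itself, and inertia acts through a finite quotient preserving the absolute value decomposition), so it recovers the individual summands $\rho_n$ inside $\sigma$. Assembling $\rho=\bigoplus_n\rho_n\tensor\Sp_n$ then recovers $\rho$ as a Weil--Deligne representation.

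The argument is largely bookkeeping once the two ingredients are in place; the only mild subtlety is the compatibility of the ``unramified over $L$'' conditions for $\rho_{\rm Weil}$ and for $\sigma$, which I expect to be the main place where care is needed. Everything else follows formally from Corollary \ref{repmain} and the absolute-value separation built into the weight--monodromy hypothesis.
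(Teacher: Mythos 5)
Your approach is essentially the paper's, which dispatches this corollary in one sentence: a Frobenius-semisimple weight--monodromy compatible Weil--Deligne representation is determined by $\rho^{N=0}$, and $\rho^{N=0}$ is a semisimple Weil representation, so Corollary~\ref{repmain} applies. Your elaboration of both halves is correct.

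One small wrinkle: the ``cofinality'' step is not quite the right justification for invoking Corollary~\ref{repmain}. That corollary, as stated, uses traces over \emph{all} $L$ where $\sigma=\rho^{N=0}$ is unramified, and cofinality of a subset does not by itself license restricting to that subset. What actually makes the transfer painless is that the two sets coincide: writing $\rho=\bigoplus_n\rho_n\tensor\Sp_n$, the underlying Weil representation has $\rho_{\rm Weil}\vert_{I_K}\iso\bigoplus_n(\rho_n\vert_{I_K})^{\oplus n}$, which has the same kernel as $\sigma\vert_{I_K}=\bigoplus_n\rho_n\vert_{I_K}$. Hence $\rho_{\rm Weil}$ and $\sigma$ are unramified over exactly the same extensions $L$, and the hypothesis of Corollary~\ref{repmain} for $\sigma$ is literally the hypothesis of Corollary~\ref{wdmain}. (Alternatively, the proof of Corollary~\ref{repmain} via Theorem~\ref{X16} only uses the fields $L\subset F^{nr}$ of ramification degree $e_{F/K}$, and these are already in your set.) For the second half, rather than arguing that the Frobenius generalized-eigenspace decomposition is Weil-stable, it is cleaner to observe that the isomorphism class of $\sigma$ determines its multiset of irreducible constituents, each of which lies in a single $\rho_n$ and hence has a well-defined Frobenius weight; grouping constituents by weight recovers each $\rho_n$ up to isomorphism, and then $\rho\iso\bigoplus_n\rho_n\tensor\Sp_n$.
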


\begin{corollary}
\label{nonabmain}
Let $X/K$ be a proper smooth variety and $0\le i\le 2\dim X$. 
If the $G_K$-representation $\rho_\ell=\Het^i(X_{\bar K},\Ql)(\frac i2)$ is Frobenius semisimple and 
weight-monodromy compatible, then it is uniquely determined by 
 $\Tr\rho_\ell^{I_L}(\Frob_L)$ 
for finite separable extensions $L/K$ for which $I_L$ acts unipotently.
In particular, if these traces are independent of $\ell$, then so is
the Weil-Deligne representation associated to $\Het^i(X_{\bar K},\Ql)$.
\marginpar{$\rho_\ell^{ss}$ is unramified?}
\end{corollary}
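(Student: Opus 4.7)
The plan is to reduce the statement to Corollary~\ref{wdmain} applied to the Weil–Deligne representation $(\rho_{\rm Weil},N)$ attached to $\rho_\ell$ via Grothendieck's monodromy theorem. By hypothesis this Weil–Deligne representation is Frobenius-semisimple and weight-monodromy compatible, so Corollary~\ref{wdmain} is available, and what remains is to translate the $\ell$-adic invariants $\Tr\rho_\ell^{I_L}(\Frob_L)$ into the language $\tr\rho^{N=0}(\Frob_L)$ used there.

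The first step is to match the two notions of unramifiedness. Recall that $\rho_\ell$ and $\rho_{\rm Weil}$ are related by $\rho_\ell(\sigma)=\rho_{\rm Weil}(\sigma)\exp(N\,t_\ell(\sigma))$ for $\sigma\in W_K$, where $t_\ell:I_K\to\Ql$ is the $\ell$-adic tame character. Hence for $\sigma\in I_L$ the condition $\rho_{\rm Weil}(\sigma)=1$ is equivalent to $\rho_\ell(\sigma)=\exp(N\,t_\ell(\sigma))$, i.e.\ to $I_L$ acting unipotently. The second step is to identify the spaces: when $I_L$ acts unipotently, $\rho_\ell^{I_L}=\ker N=\rho^{N=0}$ as subspaces of the underlying vector space (using that $t_\ell(I_L)$ spans the tame quotient). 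On this subspace the exponential factor is trivial, so $\Frob_L$ acts via $\rho_{\rm Weil}(\Frob_L)$, which gives $\Tr\rho_\ell^{I_L}(\Frob_L)=\tr\rho^{N=0}(\Frob_L)$.

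With these two identifications, the hypothesis of Corollary~\ref{wdmain} is verified verbatim and gives uniqueness of $(\rho_{\rm Weil},N)$, and hence of $\rho_\ell$ itself. For the independence-of-$\ell$ assertion, one applies the same reasoning to $\Het^i(X_{\bar K},\Ql)$: the Tate twist by $i/2$ affects only Frobenius eigenvalues by an unramified scalar and preserves $N$, so its removal is harmless. Fixing an embedding $\bar\Ql\hookrightarrow\C$, coincidence of traces for two primes $\ell,\ell'$ therefore forces an isomorphism of the associated Weil–Deligne representations by Corollary~\ref{wdmain}.

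The main delicate point is the second step — matching $\rho_\ell^{I_L}$ with $\rho^{N=0}$ together with their Frobenius actions — since it hinges on the precise conventions for the associated Weil–Deligne representation and on the fact that unipotent inertia acts through $\exp(N t_\ell)$; everything else is formal. I would also include a one-line remark that the definition of ``Frobenius semisimple'' adopted in Corollary~\ref{wdmain} coincides with the semisimplicity of $\rho_{\rm Weil}$ obtained from $\rho_\ell$ by the monodromy filtration construction, to avoid any ambiguity when invoking it.
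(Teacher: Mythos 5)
Your proposal is correct and follows exactly the route the paper intends: the corollary is stated as an immediate consequence of Corollary~\ref{wdmain}, with no proof given, the only content being the translation you supply — that for $L$ with unipotent $I_L$-action one has $\rho_\ell^{I_L}=\ker N=\rho^{N=0}$ (using $\rho_\ell(\sigma)=\exp(t_\ell(\sigma)N)$ on $I_L$ and that $t_\ell(I_L)\ne 0$), that the Frobenius actions on this subspace agree since the $\exp$-factor is trivial there, and that ``$I_L$ unipotent'' matches ``$\rho_{\rm Weil}$ trivial on $I_L$'' (the nontrivial direction using that $\rho_{\rm Weil}|_{I_L}$ has finite image and commutes with $N$, so a unipotent image must be trivial). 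Your handling of the Tate twist and the independence-of-$\ell$ clause is also as intended; the whole argument is correct and coincides with the derivation the paper leaves implicit.
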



For semistable abelian varieties these traces are known to be independent of 
$\ell$ \cite[IX, Thm~4.3]{SGA7I}, and
we get the well-known
independence of $\ell$ for Weil-Deligne representations of general 
abelian varieties (see \cite[Ex. 8.10]{DelC}, 
\cite[Rem. 2.4.6]{Fon94} and 
\cite[Prop. 2.8.1]{BCGP} for a proof): 

\begin{corollary}
\label{independence}
For an abelian variety $A/K$, the Weil--Deligne representation associated 
to $\Het^1(A_{\bar K}, \Ql)$ is independent of $\ell$.
\end{corollary}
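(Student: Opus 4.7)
The plan is to apply Corollary \ref{nonabmain} with $X=A$ and $i=1$, reducing the $\ell$-independence of the Weil--Deligne representation to the already known $\ell$-independence of Frobenius traces on inertia invariants in the semistable case.

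First I would verify the hypotheses of Corollary \ref{nonabmain} for $\rho_\ell := \Het^1(A_{\bar K}, \Ql)(\tfrac12)$: Frobenius semisimplicity and weight-monodromy compatibility. Both are classical for abelian varieties. Indeed, by Grothendieck's semistable reduction theorem there is a finite Galois extension $F/K$ over which $A$ is semistable, and for such $F$ the $\ell$-adic $H^1$ is built from the Tate module of the abelian quotient of the identity component of the Néron model's special fibre (weight $1$, Frobenius semisimple by Weil) and the character group of the toric part (weight $0$, unramified), coupled by the monodromy $N$. After twisting by $\tfrac12$ both properties hold for $G_F$, and they descend to $G_K$ since the absolute values of Frobenius eigenvalues are unchanged and Weil-group semisimplicity is preserved under passing between a subgroup of finite index and its ambient group.

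By Corollary \ref{nonabmain}, it then suffices to show that $\Tr\rho_\ell^{I_L}(\Frob_L)$ is independent of $\ell$ for every finite separable $L/K$ on which $I_L$ acts unipotently. By Grothendieck's criterion, this unipotence is equivalent to $A_L$ being semistable, and for such $L$ the cited \cite[IX, Thm~4.3]{SGA7I} identifies $\rho_\ell^{I_L}$ with the $\ell$-adic Tate module of the connected part of the special fibre of the Néron model of $A_L$; the Frobenius trace is then computed directly on the special fibre over $\res L$, manifestly independent of~$\ell$.

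Passing from the Tate twist $\rho_\ell$ back to $\Het^1(A_{\bar K},\Ql)$ is free, since twisting by $\tfrac12$ multiplies all Frobenius eigenvalues by $q^{-1/2}$ uniformly in~$\ell$. The only step requiring real care is the equivalence ``$I_L$ acts unipotently on $\Het^1$ iff $A_L$ is semistable''; once that standard criterion is invoked, the statement follows from Corollary~\ref{nonabmain} with essentially no further work, making this the quickest of the applications rather than one with a genuine obstacle.
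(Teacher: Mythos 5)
Your proposal is correct and follows the same route as the paper: invoke Corollary \ref{nonabmain} for $\rho_\ell=\Het^1(A_{\bar K},\Ql)(\tfrac12)$ (using, as the paper also does, the SGA7 facts that this is Frobenius semisimple and weight-monodromy compatible, and that unipotence of $I_L$ on $H^1$ is equivalent to semistability of $A_L$), and then cite \cite[IX, Thm~4.3]{SGA7I} for the $\ell$-independence of $\Tr\rho_\ell^{I_L}(\Frob_L)$ in the semistable case. The extra detail you supply about verifying the hypotheses and undoing the half-Tate twist is a welcome expansion of what the paper leaves implicit, but the argument is the same.
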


Finally, we also have a version of Theorem \ref{abmain} for 
semisimple $\ell$-adic representations:

\def\KFn{K_{\cF,n}}
\def\KFnp{E}

\begin{theorem}
\label{genunifield}
Suppose $\vchar K=0$. Let $n\ge 0$ and $\mathcal{F}/\Ql$ a finite extension. There is a finite Galois extension $\KFn/K$ such that all 
continuous semisimple representations $\rho_\ell: G_K\to \GL_n(\mathcal{F})$ are 
\begin{enumerate}
\item 
unramified over subfields $K\subset L\subset \KFn$ for which
$\KFn/L$ is unramified;

\item
uniquely determined by the traces $\tr\rho_\ell(\Frob_L)$ for $L$ as in (1).
\end{enumerate}
Moreover, any finite extension $\KFnp/\KFn$, with $\KFnp$ Galois over $K$, has this property.
\end{theorem}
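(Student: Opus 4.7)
The plan is to produce $K_{\mathcal{F},n}$ in two steps and then invoke Theorem~\ref{X16}. First, I establish a uniform bound $N=N(n,\mathcal{F})$ on $|\rho_\ell(I_K)|$ across all continuous semisimple $\rho_\ell:G_K\to\GL_n(\mathcal{F})$. Second, I let $H_0\subset I_K$ be the intersection of all open subgroups of $I_K$ of index $\le N$ and realise $H_0$ as the absolute inertia of a finite Galois extension $F/K$; enlarging $F$ by an unramified extension of sufficiently large degree then supplies enough subfields for Theorem~\ref{X16} to apply.

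For the uniform bound, Grothendieck's local monodromy theorem forces $\rho_\ell(I_K)$ to be finite, since semisimplicity kills the log-monodromy. Conjugate into $\GL_n(\mathcal{O}_{\mathcal{F}})$. The wild-inertia image $\rho_\ell(P_K)$ is a finite $p$-group (with $p$ the residue characteristic of $K$, coprime to $\ell$); the kernel of the reduction $\GL_n(\mathcal{O}_{\mathcal{F}})\to\GL_n(\F_{\mathcal{F}})$ is pro-$\ell$, so $\rho_\ell(P_K)$ injects into $\GL_n(\F_{\mathcal{F}})$, bounding its order. The tame-quotient image is cyclic of order coprime to $p$, because finite quotients of $\prod_{q\ne p}\Z_q$ are cyclic; its generator's eigenvalues are roots of unity in extensions of $\mathcal{F}$ of degree $\le n$, and since $\mathcal{F}/\Ql$ is finite only finitely many roots of unity fit into such bounded-degree extensions. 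Together these give a uniform $N$.

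For the construction, Krasner's lemma applied to the Henselian field $K^{nr}$ yields only finitely many extensions of $K^{nr}$ of degree $\le N$, hence finitely many open subgroups of $I_K$ of index $\le N$. Their intersection $H_0$ is open in $I_K$ with finite index, and normal in $G_K$ since $G_K$-conjugation permutes these subgroups. Its fixed field $\bar F=(K^s)^{H_0}$ is Galois over $K$, contains $K^{nr}$, and $[\bar F:K^{nr}]<\infty$. Write $\bar F=K^{nr}(\alpha)$ with $\alpha$ algebraic over a finite unramified $K'/K$, and let $F$ be the Galois closure of $K'(\alpha)$ over $K$. Then $F\subset\bar F$ and $F\cdot K^{nr}=\bar F$, whence $I_F=G_F\cap I_K=H_0$. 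Finally set $K_{\mathcal{F},n}=F\cdot K_M$, where $K_M/K$ is the unramified extension of degree $M$ divisible by $\mathrm{lcm}(1,\ldots,n[I_K:H_0])$; this preserves $I_{K_{\mathcal{F},n}}=H_0$ while making the residue degree of $K_{\mathcal{F},n}/K$ divisible by $M$.

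Condition (1) is then immediate: for $L$ with $K_{\mathcal{F},n}/L$ unramified, $I_L\subset I_{K_{\mathcal{F},n}}=H_0\subset\ker(\rho_\ell|_{I_K})$, so $\rho_\ell(I_L)=1$. For (2), apply Theorem~\ref{X16} with $F:=K_{\mathcal{F},n}$ to decompose $\rho_\ell=\bigoplus\rho_i\tensor\Psi_i$; by (iv) each $\Psi_i$ is determined by $\tr\Psi_i(\Frob_K^{dm_i})$ for $1\le d\le\dim\Psi_i\le n$, and (ii) expresses these as weighted sums of $\tr\rho_\ell(\Frob_L)$ over subfields $L$ of full ramification and residue degree $dm_i$ inside $K_{\mathcal{F},n}^{nr}$. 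The divisibility condition on $M$ ensures all such $L$ can be chosen inside $K_{\mathcal{F},n}$, where they automatically satisfy $K_{\mathcal{F},n}/L$ unramified. The moreover clause follows by the same argument with $E$ in place of $K_{\mathcal{F},n}$, since $I_E\subset H_0$ preserves (1) and Theorem~\ref{X16} applied with $F:=E$ gives (2). The main obstacle is the uniform bound on $|\rho_\ell(I_K)|$: controlling the $p$-, $\ell$-, and tame parts of the inertia image simultaneously, using the local-field structure of $\mathcal{F}$ to limit roots of unity in bounded-degree extensions.
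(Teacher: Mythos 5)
Your overall strategy mirrors the paper's: bound the inertia images uniformly, build a finite Galois $F/K$ through whose $F^{nr}$ every relevant $\rho_\ell$ factors, and then invoke Theorem~\ref{X16}. The way you get the uniform bound is somewhat different --- you separate wild and tame parts (pro-$\ell$ reduction kernel to control the $p$-group, roots of unity of bounded degree over $\mathcal{F}$ to control the cyclic tame quotient), whereas the paper simply notes that $\GL_n(O_\mathcal{F})$ has a torsion-free open subgroup $U$ and bounds $|\tau(G_K)|$ by $(\GL_n(O_\mathcal{F}):U)$, then uses finiteness of separable extensions of $K$ of bounded degree. Your argument is fine and arguably more concrete, though it bounds only the inertia image rather than the full image, which means you then have extra work realising $H_0$ as an absolute inertia group; the paper's route skips that.

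The genuine gap is in the construction of $K_{\mathcal{F},n}$. You set $K_{\mathcal{F},n}=F\cdot K_M$ with $M$ divisible by $\mathrm{lcm}(1,\dots,n[I_K:H_0])$ and then assert that ``the divisibility condition on $M$ ensures all such $L$ can be chosen inside $K_{\mathcal{F},n}$.'' First, the word \emph{chosen} is misleading: Theorem~\ref{X16}(ii) sums over \emph{all} $L\subset F^{nr}$ with ramification degree $|I|$ and residue degree $dm_i$ (equivalently over all $g\in\Frob_K^{dm_i}I_{F^{nr}/K}$), so every one of them must lie inside $K_{\mathcal{F},n}$ for the data in (1) to suffice. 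Second, the divisibility condition you impose does not guarantee this. For $L=L_{g_0}$ with $g=(g_0,dm_i)$, one has $L_{g_0}\subset F\cdot K_M$ precisely when $dm_i\cdot\mathrm{ord}(g_0)$ divides $\mathrm{lcm}(f_{F/K},M)$, and $\mathrm{ord}(g_0)$ can involve the residue degree $f_{F/K}$ of $F$, not just $|I|$. Concretely, if $\Gal(F/K)$ is cyclic of order $8$ with $|I|=4$ and $f_{F/K}=2$, and $n=1$, then $dm_i=1$, the relevant $g_0$ are the odd residues mod $8$ (all of order $8$), so one needs $8\mid\mathrm{lcm}(2,M)$; but your $M=\mathrm{lcm}(1,\dots,4)=12$ gives $\mathrm{lcm}(2,12)=12$, and $8\nmid 12$. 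So some of the required $L$ escape $K_{\mathcal{F},n}$. The paper avoids this entirely by \emph{defining} $K_{\mathcal{F},n}$ to be the compositum inside $F^{nr}$ of all $L$ with $e_{L/K}=e_{F/K}$ and $f_{L/K}\le n^3$; then containment of the needed $L$ is automatic, and the rest of your argument goes through. Replacing your $F\cdot K_M$ by that compositum (or by $F\cdot K_M$ with a correctly chosen $M$, e.g.\ $M$ a multiple of $\exp(\Gal(F/K))\cdot\mathrm{lcm}(1,\dots,n^3)$) repairs the proof.
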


Again, because a Frobenius semisimple weight-monodromy compatible representation 
is determined by its maximal semisimple subrepresentation, we deduce
%

\begin{corollary}
\label{genunifield2}
Suppose $\vchar K=0$.
All Frobenius semisimple weight-monodromy compatible representations $\rho_\ell\colon G_K\to \GL_n(\mathcal{F})$
are uniquely determined by the traces $\Tr\rho_\ell^{I_L}(\Frob_L)$ 
with $L$ and $\KFn$ (or $\KFnp$) as in the theorem.
\end{corollary}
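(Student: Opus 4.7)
The plan is to derive Corollary \ref{genunifield2} from Theorem \ref{genunifield} in the same way Corollary \ref{wdmain} was obtained from Corollary \ref{repmain}. Write the Weil--Deligne representation attached to $\rho_\ell$ as $\rho=\bigoplus_n \rho_n\tensor \Sp_n$. By the observation preceding Corollary \ref{wdmain}, weight-monodromy compatibility means $\rho$ (and hence $\rho_\ell$) is recovered from $\rho^{N=0}=\bigoplus_n \rho_n$: the summands $\rho_n$ are distinguished by the absolute values of their Frobenius eigenvalues, and each $\Sp_n$ is determined by its dimension. So it is enough to recover $\rho^{N=0}$ from the given traces.

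To apply Theorem \ref{genunifield} to $\rho^{N=0}$, I first realise it as a genuine $G_K$-representation. The Weil--Deligne commutation $\rho_{\text{Weil}}(\sigma)N=q^{-v(\sigma)}N\rho_{\text{Weil}}(\sigma)$ shows that $\ker N\subseteq \rho_\ell$ is $G_K$-stable, and the $G_K$-action on $\ker N$ agrees with $\rho^{N=0}$ viewed as a Weil, hence Galois, representation. Thus $\rho^{N=0}\colon G_K\to\GL_{n'}(\mathcal{F})$ is continuous and semisimple with $n'\le n$, so Theorem \ref{genunifield} applies with the same $\KFn$ (or any permissible enlargement $\KFnp$). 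It tells us that $\rho^{N=0}$ is unramified over every $K\subset L\subset\KFn$ with $\KFn/L$ unramified, and is determined by the traces $\tr\rho^{N=0}(\Frob_L)$ over such~$L$.

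Finally, I identify these traces with $\Tr\rho_\ell^{I_L}(\Frob_L)$. For $L$ as above, unramifiedness of $\rho^{N=0}$ over $L$ forces each $\rho_n$, and hence the Weil part of $\rho$, to be unramified over $L$. Grothendieck's $\ell$-adic monodromy theorem then gives $\rho_\ell(\sigma)=\exp(Nt_\ell(\sigma))$ on an open subgroup of $I_L$, and surjectivity of $t_\ell$ onto an open subgroup of $\Ql$ yields $\rho_\ell^{I_L}=\ker N$ with $\Frob_L$ acting through $\rho^{N=0}$. This gives $\Tr\rho_\ell^{I_L}(\Frob_L)=\tr\rho^{N=0}(\Frob_L)$, as required.

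The main subtle point is precisely this last comparison: checking that the fields $L$ furnished by Theorem \ref{genunifield} applied to $\rho^{N=0}$ are exactly the ones over which the full $\rho_\ell$ is semistable, so that the inertia invariants $\rho_\ell^{I_L}$ coincide with $\ker N$. Once the equivalence \emph{$\rho^{N=0}$ unramified over $L$} $\Leftrightarrow$ \emph{Weil part of $\rho$ unramified over $L$} $\Leftrightarrow$ \emph{$I_L$ acts unipotently on $\rho_\ell$} is in hand, the rest is bookkeeping between the $\ell$-adic and Weil--Deligne pictures.
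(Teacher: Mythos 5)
Your proposal is correct and fills in, faithfully, the details behind the paper's one-sentence deduction (``\ldots because a Frobenius semisimple weight-monodromy compatible representation is determined by its maximal semisimple subrepresentation, we deduce''). You correctly identify that maximal semisimple subrepresentation with $\ker N$, realise it as a genuine continuous semisimple $G_K$-representation to which Theorem~\ref{genunifield} applies, and verify the key bookkeeping that for $L$ with $\KFn/L$ unramified one has $\rho_\ell^{I_L}=\ker N$ with $\Frob_L$ acting through $\rho^{N=0}$, so the hypothesised traces are exactly those the theorem requires. One small remark: the stability of $\ker N$ under $G_K$ (not just the Weil group) is best quoted from the $\ell$-adic relation $\rho_\ell(g)N\rho_\ell(g)^{-1}=\chi_\ell(g)^{\pm1}N$ coming from Grothendieck's monodromy theorem, rather than from the Weil--Deligne commutation relation alone; this is what you invoke implicitly in the final paragraph anyway.
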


\begin{remark}
\label{Che}
Our strategy of proving that Weil representations are determined by their 
Frobenius traces over extensions where they are unramified is known, 
and goes back at least to Chebotarev. He used the fact that after adjoining enough 
roots of unity, a finite extension of number fields has abundance of cyclotomic extensions 
inside it to reduce his density theorem to Hecke's cyclotomic case
(see e.g. \cite{LS}). Similarly, 
Corollary \ref{repmain} relies on the abudance of unramified extensions inside $F^{nr}/K$
for a finite Galois extension $F/K$. 
What is new is Theorem \ref{X16}, which makes this explicit in practice.

As mentioned above, Corollary \ref{repmain} is due to Saito \cite[Lemma 1(1)]{Sai}. His
assumption $n(\sigma)\ge 0$ is slightly stronger but the proof only uses 
$n(\sigma)>0$, which is equivalent to that in Corollary \ref{repmain}. 
The proof of \cite[Lemma 1(1)]{Sai} does not explain why $m$ is a function of the traces,
and the field cut out by $\sigma$ is not a finite extension of $K$. 
However, both of these issues can be circumvented with an argument similar to the proof of 
Corollary \ref{repmain} below, by comparing two representations instead of taking one.
\end{remark}

\section*{Proof of Theorem \ref{X16}}

(i) This is standard; see e.g. \cite[Lemma 2.3]{weil} and its proof (taking $F$ as in the theorem).

(ii) By linearity in $\rho$, it suffices to prove this when $\rho=\rho_i\tensor\Psi$, with $\Psi$ 
unramified 1-dimensional.
Because the formula is also invariant under twisting by unramified 1-dimensional characters, 
we may assume $\Psi=\triv$. 
In particular $\Psi$ factors through $\Gal(F/K)$, and this case is proved in \cite[Cor 9 (iii)]{clifford}.
Note that ranging over $L$ is the same as ranging over $g\in \Frob_K^{dm_i}\!I_{F^{nr}/K}$ under the correspondence $L\mapsto \Frob_L$ and $g\mapsto$ $g$-invariants of~$F^{nr}$.

(iii),(iv) 
This is essentially \cite[Cor 9 (iv)]{clifford}. 
It is stated there for finite groups, but the proof is the same: 
(iii) and (iv) follow formally from (ii) plus the fact that 
$\rho_i$ is invariant under twisting by unramified characters of order $m_i$,
which is \cite[Lemma 5]{clifford}.
\hfill\qed

\section*{Proof of Corollary \ref{repmain}}

Let $\rho_1, \rho_2$ be two non-isomorphic 
semisimple Weil representations over $K$.
They have finite inertia images, and so there is 
a finite Galois extension $F/K$ such that both $\rho_1$ and $\rho_2$
factor through $\Gal(F^{nr}/K)$. By
Theorem \ref{X16}, $\rho_1$ and $\rho_2$
can be distinguished from one another by their 
Frobenius traces over extensions $L/K$ inside $F^{nr}$
over which they are unramified.
%
%
%
%
%

%
%
%

%


\section*{Proof of Theorem \ref{genunifield}}

Fix $n$ and $\cF$ as in the theorem.
First consider continuous representations $\tau: G_K\to\GL_n(\cF)$ with finite image. 
After taking a $\tau(G_K)$-invariant $O_\cF$-lattice, we 
may assume that $\tau$ lands in $\GL_n(O_\cF)$. 
Recall that $\GL_n(O_\cF)$ has a compact open subgroup $U$
with no elements of finite order (e.g. using exp and log as in \cite[Appendix]{ST}).
\marginpar{MO 287553}
\marginpar{or by comparing characteristic polynomials}
%
%
Therefore $|\tau(G_K)|\le (\GL_n(O_\cF):U)$, and $\tau$ factors thorough a Galois extension of $K$ of 
at most this degree. As $\vchar K=0$, there are only finitely many separable extensions of $K$ of a given degree
by a theorem of Krasner \cite{Kr}, and so%
\marginpar{Outsource to a remark on Thm 3, and note it applies to virtual representations as well}
there is a finite Galois extension $F/K$, such that every $\tau$ with finite image factors through $\Gal(F/K)$.
It follows that every continuous semisimple representation 
$\rho: G_K\to \GL_n(\cF)$ factors through $\Gal(F^{nr}/K)$.%

Now apply Theorem \ref{X16}. Let $\KFn$ be the compositum of all $L/K$ in $F^{nr}$ with
ramification degree $e_{L/K}=e_{F/K}$ and residue degree over $K$ bounded by $n^3$. By the theorem
(parts (ii)-(iv) and using that $dm_i\le n^3$ in (iv)), Frobenius traces over these fields 
determine $\rho_\C: W_K\to \GL_n(\C)$ for some chosen embedding $\cF\injects \C$.
Since $W_K\subset G_K$ is dense and $\rho_\C$ is continuous, it is determined on all of $G_K$ as well.
Therefore $\rho$ itself is uniquely determined as a representation to $\GL_n(\cF)$.
The same is true for any finite extension $\KFnp$ of $\KFn$, Galois over~$K$.
\hfill\qed
%
%

%
%

%
%

\section*{Proof of Theorem \ref{abmain}}

We will prove a slightly stronger statement:

\begin{theorem}
\label{abmain2}
Suppose $\vchar K=0$.
Fix $g, c\in \N$ and a prime $\ell$ different from the residue characteristic of $K$. 
There is a finite Galois extension $K_{\ell,g,c}/K$ such that
\begin{enumerate}
\item[(i)]
Every $g$-dimensional abelian variety $A/K$ has semistable reduction over $K_{\ell,g,c}$.
\item[(ii)]
The $G_K$-representation
$\rho_\ell=\Het^1(A_{\bar K},\Ql)$ is uniquely determined by 
$\Tr\rho^{I_L}_\ell(\Frob_L)$ for subfields 
$K\subset L\subset K_{\ell,g,c}$ with $|\res L|>c$ and $K_{\ell,g,c}/L$ unramified.
\item[(iii)]
When $g\ge 2$, for every smooth projective curve $X/K$ of genus $g$, the $G_K$-representation
$\Het^1(X_{\bar K},\Ql)$ is uniquely determined by point counts $\#\bar \cX_L(\res L)$
with $L$ as in (ii).
Here $\cX_L$ is the minimal regular model of $X/L$ with special fibre $\bar \cX_L/\res L$.
\end{enumerate}
\end{theorem}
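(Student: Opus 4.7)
The plan is to establish (i) from Grothendieck's semistable reduction theorem together with the finiteness of local extensions of bounded degree, to derive (ii) by applying Theorem \ref{X16} to the $N\!=\!0$ part of the associated Weil--Deligne representation, and to deduce (iii) from (ii) via the Grothendieck--Lefschetz trace formula on the special fibre.

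For (i), fix an auxiliary prime $\ell'\ge 3$ coprime to the residue characteristic of $K$. By Grothendieck's criterion, a $g$-dimensional abelian variety $A/K$ acquires semistable reduction over any extension containing $K(A[\ell'])$, and $K(A[\ell'])/K$ is Galois of degree dividing $|\GL_{2g}(\F_{\ell'})|$, independently of $A$. Since $K$ admits only finitely many separable extensions of any bounded degree, all such torsion fields lie inside a single finite Galois extension $K'/K$. Take $K_{\ell,g,c}$ to be the compositum of $K'$ with an unramified extension of $K$ chosen large enough that $|\res{K_{\ell,g,c}}|>c$ and every integer up to $8g^3$ is realised as $f_{L/K}$ for some subfield $L\subset K_{\ell,g,c}$ with $K_{\ell,g,c}/L$ unramified.

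For (ii), write $(\rho_{\rm Weil},N)$ for the Weil--Deligne representation associated to $\rho_\ell$. Semistability of $A$ over $K_{\ell,g,c}$ forces $\rho_{\rm Weil}$, and in particular its subrepresentation $\rho_{\rm Weil}^{N=0}=\ker N$, to be unramified over $K_{\ell,g,c}$ and to factor through $\Gal(K_{\ell,g,c}^{nr}/K)$. For $L\subset K_{\ell,g,c}$ with $K_{\ell,g,c}/L$ unramified one has $I_L=I_{K_{\ell,g,c}}$, so $\rho_\ell^{I_L}=\ker N$ and $\Tr\rho_\ell^{I_L}(\Frob_L)=\tr\rho_{\rm Weil}^{N=0}(\Frob_L)$. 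Apply Theorem \ref{X16} to $\rho_{\rm Weil}^{N=0}$: every irreducible constituent $\rho_i$ with $\Psi_i\ne0$ satisfies $\dim\rho_i,\dim\Psi_i\le 2g$ and $m_i\le 4g^2$, so the residue degrees $dm_i\le 8g^3$ required by part (iv) are realised by our choice of $K_{\ell,g,c}$. Hence $\rho_{\rm Weil}^{N=0}$ is determined by the traces $\Tr\rho_\ell^{I_L}(\Frob_L)$. Since $\rho_\ell$ is Frobenius semisimple and weight-monodromy compatible (both classical for abelian varieties), Corollary \ref{wdmain} then recovers the full $\rho_\ell$.

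For (iii), the unramifiedness of $K_{\ell,g,c}/L$ implies $\Jac(X)$ is semistable over $L$, hence so is $X$ (Deligne--Mumford), and the minimal regular model $\cX_L$ has a semistable special fibre $\bar\cX_L$. Raynaud's identification of $\operatorname{Pic}^0(\bar\cX_L)$ with the identity component of the N\'eron model of $\Jac(X)$ gives
$$
 \Tr(\Frob_L\mid \Het^1(X_{\bar L},\Ql)^{I_L})=\Tr(\Frob_L\mid \Het^1(\bar\cX_L\otimes\resbar L,\Ql)),
$$
and Grothendieck--Lefschetz applied to $\bar\cX_L$ yields
$$
 \#\bar\cX_L(\res L)=1-\Tr(\Frob_L\mid \Het^1(X)^{I_L})+|\res L|\cdot c_L,
$$
where $c_L$ is the number of $\Frob_L$-fixed geometric irreducible components. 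The Weil bound $|\Tr(\Frob_L\mid \Het^1(X)^{I_L})|\le 2g\sqrt{|\res L|}$, combined with the choice $c>16g^2$, pins down $c_L$ as the unique integer nearest to $(\#\bar\cX_L(\res L)-1)/|\res L|$, thereby recovering the trace from the point count and reducing to (ii).

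The main obstacle is the cohomological step in (iii). The minimal regular model may contain long $\mathbb{P}^1$-chains at thick nodes, making the component count (and thus $c_L$) unbounded in $g$ alone, so one cannot hope to bound $c_L$ by a function of $g$ and extract it that way. The remedy is that these $\mathbb{P}^1$'s contribute only to $H^0$ and $H^2$, leaving the $H^1$-trace identification intact, so the Weil bound still separates $c_L$ from $\Tr(\Frob_L\mid \Het^1(X)^{I_L})$ once $|\res L|>16g^2$.
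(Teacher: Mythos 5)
The overall architecture of your proposal matches the paper (bound the semistable-reduction field, apply Theorem~\ref{X16} to the $N=0$ part, and use Grothendieck--Lefschetz to translate point counts into $H^1$-traces), and your treatment of part~(iii) is essentially identical to the paper's. Your part~(i) is also a legitimate alternative: you go directly through Raynaud's criterion $K(A[\ell'])$ rather than routing through Theorem~\ref{genunifield} as the paper does, and this works.

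However, there is a genuine gap in part~(ii): you never address the constraint $|\res L|>c$. You ensure $|\res{K_{\ell,g,c}}|>c$ for the \emph{top} field and arrange for all residue degrees $f_{L/K}\le 8g^3$ to be realised among the subfields $L$ with $K_{\ell,g,c}/L$ unramified, but the fields you actually need to feed into Theorem~\ref{X16}(iv) are precisely those with $f_{L/K}=dm_i$ for $d=1,\dots,N$, and when $f_{L/K}$ is small these $L$ have $|\res L|=|\res K|^{f_{L/K}}\le c$ and are therefore \emph{excluded} from the hypothesis of~(ii). As stated, your argument proves the case $c<|\res K|$ (morally $c=1$) but not the theorem for arbitrary $c$, which is what is needed to run~(iii), since there you must take $c>16g^2$.

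The paper closes this gap with a specific device: pick a prime $d\ne\ell$ with $|\res K|^d>c$ and $[\Ql(\zeta_d):\Ql]>4g^2$, pass to the unramified extension $K'/K$ of degree $d$, and run the $c=1$ argument over $K'$. All the auxiliary fields then contain $K'$ and so automatically satisfy $|\res L|>c$. One then recovers the missing Frobenius traces over the excluded small-residue-degree fields $L$ by noting that the eigenvalues $\alpha_j$ of $\rho^{I_L}(\Frob_L)$ are determined by their $d$th powers: a relation $\alpha_j\zeta_d^i$ generating a degree-$\le 2g$ extension of $\Ql$ would force $[\Ql(\alpha_j,\zeta_d):\Ql]\le 4g^2$, contradicting the choice of~$d$. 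Your proof needs this (or some comparable reconstruction step) before the appeal to Theorem~\ref{X16}(iv) is justified.
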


\begin{proof}
(i), (ii) Suppose $c=1$. Recall that $\Het^1(A_{\bar K},\Ql)$ is 
Frobenius semisimple and weight-monodromy compatible \cite[IX]{SGA7I}. For $L/K$ finite, 
$I_L$ acts unipotently on it if and only if $A/L$ is semistable 
\cite[IX, 3.5/3.8]{SGA7I}, \cite[7.4.6]{BLR}.
%
%
By Theorem \ref{genunifield} and Corollary \ref{genunifield2} for $n=2g$ and $\cF=\Ql$,
any choice of $K_{\ell,g,1}$ that contains $\KFn$ satisfies (i) and (ii).

Now let $c$ be arbitrary, and choose a prime $d\ne\ell$ so that $[\Ql(\zeta_{d}):\Ql]>4g^2$ and $|\res K|^d>c$. 
(All but finitely many primes satisfy these conditions.)
Let $K'$ be the unramified extension of $K$ of degree $d$. 
Construct $K'_{\ell,g,1}$ by the $c=1$ case over $K'$, enlarging it if necessary to contain~$\KFn$.
We claim that $K_{\ell,g,c}=K'_{\ell,g,1}$ satisfies (i) and (ii). 
By construction, (i) holds.

Suppose $A/K$ is an abelian variety. 
Frobenius traces over subfields $K'\subset L\subset K'_{\ell,g,1}$  
as in (ii) determine the restriction of $\rho_\ell$ to $K'$ uniquely
by the properties of $\KFn$. Such $L$ have 
$|\res L|\ge |\res{K'}|=|\res K|^d>c$, so it suffices to show that this restriction determines~$\rho$.

By Theorem \ref{genunifield} and Corollary \ref{genunifield2} again, 
it suffices to reconstruct Frobenius traces 
over subfields $K\subset L\subset K_{\cF,n}$ with $K_{\cF,n}/L$ unramified.
If $L$ contains $K'$ then we are done, as we have already reconstructed the restriction of $\rho_\ell$ to $K'$. 
Suppose $L\not\supset K'$, and let $\alpha_1,...,\alpha_m$ ($m\le 2g$) be the eigenvalues of $\rho^{I_L}(\Frob_L)$. 
As $LK'/L$ has degree $d$ (as $d$ is prime), we know $\alpha_1^d,...,\alpha_m^d$,
the Frobenius eigenvalues over $LK'$ from the restriction to~$K'$. 

Note that $\alpha_1$ generates an extension of $\Q_l$ of degree $\le 2g$. Now, $\alpha_1^d$ has a unique $d$th root 
that generates an extension of $\Ql$ of degree $\le 2g$, namely $\alpha_1$. Indeed, if 
$\alpha_1\zeta_d^i$ also generates such an extension for $i\ne 0\bmod d$, then 
$$
  [\Ql(\alpha_1,\zeta_d):\Ql] = [\Ql(\alpha_1,\alpha_1\zeta_d^i):\Ql] \le (2g)^2 = 4g^2,
$$
contradicting $[\Ql(\zeta_d):\Ql]>4g^2$. Therefore $\alpha_1$ is indeed 
determined by $\alpha_1^d$, and similarly for all the other $\alpha_i$, as required.


(iii) 
We claim that any $K_{\ell,g,1}$ that satisfies (i) and (ii) also satisfies
(iii), provided $c>16g^2$ (which we may clearly assume).

Let $A$ be the Jacobian of $X$. Recall that there is a canonical isomorphism
\marginpar{Reference?}
$$
  \Het^1(X_{\bar K},\Q_l) \iso \Het^1(A_{\bar K},\Q_l).
$$
By (ii), this representation, say $\rho_\ell$, is determined by the traces $\Tr\rho_\ell^{I_L}(\Frob_L)$
for $L$ as in (ii). By (i), over these fields $A$ is semistable, hence so is $X$ 
as $g\ge 2$ \cite[Thm. 1.2]{DM}. Therefore
$$
  \Het^1(X_{\bar K},\Q_l)^{I_L} \iso \Het^1((\bar\cX_L)_{\resbar L},\Q_l)
$$
as $\Gal(\resbar L/\res L)$-modules, see e.g. \cite[Thm. B.1]{newton}.
By the Grothendieck-Lefschetz trace formula \cite[p. 86, Thm 3.1]{SGA412},
$$ 
  \#\bar\cX_L(\res L) = t_0 - t_1 + t_2, 
$$
where $t_i$ is the trace of the Frobenius automorphism on $\Het^i((\bar\cX_L)_{\resbar L},\Q_l)$.
In particular, \smash{$t_1=\Tr\rho_\ell^{I_L}(\Frob_L)$} by the above.
Note that $t_0=1$ as $\bar\cX_L$ is connected, and $t_2$ is a multiple of $q=|\res L|$
\marginpar{Reference?}
as $\Het^2((\bar\cX_L)_{\resbar L},\Q_l)$ is the permutation module on the irreducible components of 
$\bar\cX_L$ twisted by $\Q_l(1)$.
%
%
%
Now, $|t_1|\le 2g\sqrt{q}$ because all eigenvalues of $\rho_\ell^{I_L}(\Frob_L)$ have absolute values 1 or $\sqrt{q}$
and $\dim\rho_\ell^{I_L}\le 2g$. 
Because $q\ge c>16g^2$, we have $2g\sqrt{q}<q/2$,~and~so
$$
  t_1 \bmod q \>\>\>=\>\>\> (t_0+t_2-\#\bar\cX_L(\res L)) \bmod q \>\>\>=\>\>\> (1-\#\bar\cX_L(\res L)) \bmod q
$$
recovers $t_1$ uniquely. This shows that $\#\bar\cX_L(\res L)$ determines $t_1=\Tr\rho_\ell^{I_L}(\Frob_L)$, 
and the collection of these for varying $L$ determines $\Het^1(X_{\bar K},\Q_l)$.
\end{proof}
\section*{An example}

%
%
%
%
We end with a numerical example of how Theorem \ref{X16} works in practice. 


Let $K=\Q_7$ and let $F/\Q_7$ be a wildly ramified Galois extension of degree 21 with (non-abelian) Galois group $G=C_7\rtimes C_3$, residue degree 3 and ramification degree 7. 
The character table of $G$ is as follows, after 
fixing a choice of a cube root of unity $\zeta_3$ and $\sqrt{-7}$ in $\C$:
\begin{table}[!h]
$$
\begin{array}{c|ccccc}
&\rm1& \tau & \tau^{-1} & \sigma & \sigma^{-1}\cr
\hline
  \triv    &1&1&1&1&1\cr
  \chi     &1&\zeta_3^2&\zeta_3&1&1\cr
  \bar\chi &1&\zeta_3&\zeta_3^2&1&1\cr
  \rho_{1} &3&0&0&\frac{-1-\sqrt{-7}}{2}&\frac{-1+\sqrt{-7}}{2}\cr
  \rho_{2} &3&0&0&\frac{-1+\sqrt{-7}}{2}&\frac{-1-\sqrt{-7}}{2}\cr
\end{array}
$$
\caption{Character table of $G=C_7{\rtimes}C_3$}
\end{table}

Consider a genus 3 curve $X$ whose Jacobian has bad reduction over $\Q_7$, but $X$ acquires good reduction over $F$.
Our goal is to determine the Galois representation $\rho=\Het^1(X_{\bar K},\Ql)\tensor_{\Ql}\C$.
By the N\'eron-Ogg-Shafarevich criterion (or just the proper smooth base change theorem), 
$I_K$ acts on $V$ through $I_{F/\Q_7}=C_7$. 
By Theorem \ref{X16}(i), and the fact that the characteristic polynomials of inertia elements have 
rational coefficients by \cite[Thm. 2]{ST},
$$
  \rho \iso (\rho_1\otimes \Psi_1) \oplus (\rho_2\otimes \Psi_2),
$$
for some unramified characters $\Psi_1, \Psi_2$. 


Theorem \ref{X16} lets us determine the $\Psi_i$ as follows. 
Let $\tilde{F}$ be the degree 7 unramified extension of $F$. For each element $g\in I_{F/\Q_7}=C_7$ let $\tilde{g}\in\Gal(\tilde{F}/\Q_7)$ be the unique element that projects to $g\in\Gal(F/\Q_7)$ and acts as $\Frob_{\Q_7}^3$ on the residue field. Write $F_{g}={\tilde F}^{\langle\tilde g\rangle}$ for the corresponding degree 21 extensions of $\Q_7$. Note that $\Jac X$ has good reduction over $F_g$, since $\tilde{F}/F_g$ is unramified.

By the theorem,
$$
  \Psi_i(\Frob_{\Q_7}^3) = \frac{1}{|C_7|} \frac{1}{\langle\Res_{C_7}\rho_i,\Res_{C_7}\rho_i\rangle} \sum_{g\in I_{F/\Q_7}} \overline{\Tr\rho_i(g)}\Tr\rho(\Frob_{F_g}) =
$$
$$
 = \frac{1}{21}\sum_{g\in I_{F/\Q_7}} \overline{\Tr\rho_i(g)}\cdot (7^3+1-\#\overline{\cX_{F_g}}(\F_{7^3})),
$$
where $\cX_{F_g}$ is a regular model of $X$ over $F_g$,
by the Grothendieck-Lefschetz trace formula (as in proof of Theorem \ref{abmain2} (iii)). 
Moreover, $\Psi_i(\Frob_{\Q_7})$ can be taken to be any cube root of this value, as the discrepancy vanishes when taking the tensor product with $\rho_i$.

As a particular example, let $F/\Q_7$ be the splitting field of
$$
  f(x)=x^7+21x^6+7,
$$
taken from \cite{JR}.
Let $\alpha$ be a root of $f$ in $F$; it is a uniformiser of $F$. 
Let $\zeta\in F$ be a primitive 18th root of unity. 
One checks that the roots $\alpha_j$ of $f$ have expansions
$$
  \alpha_j = \alpha + j\zeta\alpha^2 + O(\alpha^3), \qquad j=0,...,6.
$$
The generators of $G$ are then determined by
$$
  \sigma\colon \alpha_j\mapsto \alpha_{j+1} \text{ (order 7)}, 
  \qquad \tau\colon\alpha_j \mapsto \alpha_{2j} \text{ (order 3)},
$$
with indices taken modulo 7.
%

%

%


Now take $X: y^2=f(x)$. It acquires good reduction over $\Q_7(\alpha)$, and using Magma \cite{Magma} 
we find that  
$$
 \#\overline{\cX}_{F_{\id}}(\F_{7^3})= 7^3+1 = 344, 
$$
$$
 \#\overline{\cX}_{F_{\sigma}}(\F_{7^3})= \#\overline{\cX}_{F_{\sigma^2}}(\F_{7^3})= \#\overline{\cX}_{F_{\sigma^4}}(\F_{7^3})= 7^3-7^2+1 = 295,
$$
$$
 \#\overline{\cX}_{F_{\sigma^3}}(\F_{7^3})=  \#\overline{\cX}_{F_{\sigma^5}}(\F_{7^3})=  \#\overline{\cX}_{F_{\sigma^6}}(\F_{7^3})= 7^3+7^2+1 = 393,
$$
which leads to $\Psi_1(\Frob_{\Q_7})=\sqrt{-7}$ and  $\Psi_2(\Frob_{\Q_7})=-\sqrt{-7}$.

Conversely, let $X': y^2=x^7 + 420x^6 - 245x^3 + 1225x^2 - 833x + 189$. 
(The right-hand side is the minimal polynomial of $-\alpha^2-\alpha$.)
This curves acquires good reduction over the 
same fields as $X$, but has the opposite point counts (that is, 295 and 393 are interchanged), 
so that here $\Psi_1(\Frob_{\Q_7})=-\sqrt{-7}$ and $\Psi_2(\Frob_{\Q_7})=\sqrt{-7}$.

What makes the example interesting is that the two curves have the same point counts over all extensions 
of $\Q_7$ of degree at most 20. In that sense our result here is best possible.

\begin{remark}
\label{oops}
The assumption $\vchar K=0$ in Theorems \ref{abmain}, \ref{genunifield}, \ref{abmain2} and Corollary \ref{genunifield2} is necessary,
as we will now explain.  

First, suppose $M/K$ is any ramified separable extension of 
prime degree $p>2$. It is obtained by adjoining the root of some Eisenstein polynomial $f(x)$. Consider
the hyperelliptic Jacobian of dimension $g=(p-1)/2$
$$
  A=\Jac C, \qquad C: y^2=f(x).
$$
In the terminology of \cite{M2D2}, $C$ has one proper cluster of size $p$ 
(because $I_K$ acts transitively on roots of $f(x)$ and $\deg f=p$ is prime).
By \cite[Thm 1.20]{M2D2}, 
$$
  \rho_l=\Het^1(A_{\bar K},\Ql) \iso \gamma\tensor(\pi\ominus\triv),
$$
as an $I_K$-representation, where $\gamma$ is some tame character and $\pi$ is the permutation character 
of $I_K$ on the roots of $f$. 

Now, suppose $\vchar K=p$. 
By \cite{Kr}, there are extensions $M/K$ as above with arbitrary large valuation of the discriminant. 
Therefore, by the conductor-discriminant formula and the definition of the conductor, for every $n\ge 1$ there 
is a separable polynomial $f(x)\in K[x]$ of degree $p$ such that the higher ramification group $G_n\normal I_K$ 
acts non-trivially on the roots of $f(x)$. 

Let $K_g/K$ be any finite Galois extension. Then $G_n$ acts trivially on $K_g$ for some~$n$. However, the above
construction produces an abelian variety of dimension $g$ for which 
$\rho_l^{\smash{I_{K_g}}}\subset \rho_l^{G_n}=0$. 
Therefore $A$ has non-semistable reduction over $K_g$ and $\Tr\rho^{I_L}_\ell(\Frob_L)=0$ for every subfield
$K\subset L\subset K_g$. In other words, 
Theorems \ref{abmain}, \ref{genunifield}, \ref{abmain2} and Corollary \ref{genunifield2} fail over function fields.
\end{remark}

\subsection*{Acknowledgements}
We would like to thank 
the Warwick Mathematics Institute and 
King's College London, where parts 
of this research were carried out, 
Toby Gee and 
Aurel Page for helpful discussions,
and the referee for carefully reading the paper and for making us realise that Theorem \ref{abmain} 
fails over function fields.
This research was partially is supported by EPSRC grants EP/M016838/1 and EP/M016846/1 
`Arithmetic of hyperelliptic curves'. 
The second author was supported by a Royal Society 
University Research Fellowship.



\begin{thebibliography}{BLR}

\bibitem{BCGP}
G. Boxer, F. Calegari, T. Gee, V. Pilloni,
Abelian surfaces over totally real fields are 
potentially modular, Publ. Math. IH\'ES, Tome 134 (2021), 153--501.

\bibitem{BLR}
S.\,Bosch, W.\,L\"utkebohmert, M.\,Raynaud, N\'eron Models, 
Erg. Math., vol. 21, Springer, Berlin, 1990.

\bibitem{Magma}
W. Bosma, J. Cannon, C. Playoust,
The Magma algebra system. I: The user language, J. Symb. Comput. 24, No. 3--4 (1997),
235--265.

\bibitem{BW}
I. Bouw, S. Wewers,
Computing $L$-functions and semistable reduction of superelliptic curves,
Glasgow Math. J. 59, issue 1 (2017), 77--108. 

\bibitem{nir3}
N. Coppola, Wild Galois Representations: Elliptic curves over a 3-adic field,
Acta Arith. 195 (2020), 289--303.

\bibitem{DelC}
P. Deligne, Les constantes des \'equations fonctionnelles,
Seminaire Delange-Pisot-Poitou (Th\'eorie des Nombres) 11e ann\'ee,
1969/70, $n^{o}$ 19 bis,
Secr\'etariat Math\'ematique, Paris, 1970.

\bibitem{SGA412}
P. Deligne,
Cohomologie \'etale,
S\'eminaire de g\'eom\'etrie alg\'ebrique du Bois-Marie SGA~$4\frac{1}{2}$,
Lecture Notes in Mathematics, 569. Springer-Verlag, Berlin, 1977.

\bibitem{DelH1}
P. Deligne, Th\'eorie de Hodge I, Actes Congr\`es Intern. Math., 1970, Tome 1, 425--430.

\bibitem{DM}
P. Deligne, D. Mumford, The irreducibility of the space of curves of given genus, 
Publ. Math. IH\'ES, Tome 36 (1969), 75--109.

\bibitem{M2D2}
T. Dokchitser, V. Dokchitser, C. Maistret, A. Morgan, 
Arithmetic of hyperelliptic curves over local fields, Math. Ann. 385 (2023), 1213--1322.

\bibitem{newton}
T. Dokchitser, Models of curves over discrete valuation rings, Duke Math. J. 170, no. 11 (2021), 
2519--2574.

\bibitem{weil}
T. Dokchitser, V. Dokchitser, Euler factors determine local Weil representations, 
J. Reine Angew. Math. 717 (2016), 35--46.

\bibitem{clifford}
T. Dokchitser, V. Dokchitser, Character formula for conjugacy classes in a coset, arxiv: 2105.07247.
%

\bibitem{Fon94}
J.-M. Fontaine, Repr\'esentations $l$-adiques
potentiellement semi-stables, in Ast\'erisque no. 223 (1994), 321--347,
with an appendix by P. Colmez, P\'eriodes $p$-adiques
(Bures-sur-Yvette, 1988).

\bibitem{SGA7I}
A.\,Grothendieck, Mod\`eles de N\'eron et monodromie, LNM 288,
Springer, 1972.

\bibitem{JR}
J. W. Jones, D. P. Roberts, A database of local fields, 
J. Symb. Comput. 41 (2006), 80--97.

\bibitem{KP}
D. Kohen, A. Pacetti, Heegner points on Cartan non-split curves,
Canadian Journal of Math. 68 (2016), 422-444.

\bibitem{Kr}
M. Krasner, Nombre des extensions d'un degr\'e donn\'e d'un corps ${\mathfrak p}$-adique, Les Tendances
G\'eom\'etriques en Alg\`ebre et Th\'eorie des Nombres, Ed. CNRS, Paris, 1966, pp. 143--169.

\bibitem{LS}
H. W. Lenstra, P. Stevenhagen, Chebotar\"ev and his Density Theorem, Math. Intelligencer vol. 18, no. 2, 26--37.

\bibitem{Sai}
T. Saito, Modular forms and $p$-adic Hodge theory, Invent. Math. 129 (1997), 607--620.

\bibitem{Sch}
P. Scholze, Perfectoid spaces, Publ. Math. IH\'ES, 116 (2012), 245--313.

\bibitem{ST}
J.-P. Serre, J. Tate, Good reduction of abelian varieties,
Annals of Math. 68 (1968), 492--517.

\bibitem{TatN}
J. Tate, Number theoretic background, in: Automorphic forms,
representations and L-functions, Part 2
(ed. A. Borel and W. Casselman), Proc. Symp. in Pure Math.
33 (AMS, Providence, RI, 1979) 3-26.

\end{thebibliography}
\end{document}